\title[Galois Lines]{Galois lines for space elliptic curve with $j=12^3$}
\author[Mitsunori Kanazawa and Hisao Yoshihara]{}
\newtheorem{theorem}{Theorem}
\newtheorem{lemma}[theorem]{Lemma}
\newtheorem{corollary}[theorem]{Corollary}
\theoremstyle{definition}
\newtheorem{example}[theorem]{Example}
\newcommand{\ds}{\displaystyle}
\theoremstyle{remark}
\newtheorem{remark}[theorem]{Remark}
\begin{document}
\maketitle

\begin{center}

{\sc Mitsunori Kanazawa and Hisao Yoshihara}\\
\medskip
{\small{\em Graduate School of Science and Technology,\\
 Niigata University Niigata 950-2181, Japan and \\
 Department of Mathematics, Faculty of Science, \\ 
Niigata University, Niigata 950-2181, Japan \\ 
E-mail \tt{  kana@js3.so-net.ne.jp and yosihara@math.sc.niigata-u.ac.jp
  }} }
\end{center}

\medskip

\begin{abstract}
The $V_4$-lines for each linearly normal space elliptic curve form the edges of a tetrahedron, however in case the elliptic curve has $j=12^3$, there exist $Z_4$-lines. We show the arrangement of $V_4$ and $Z_4$-lines explicitly for such a curve. As a corollary we obtain that each irreducible quartic curve with genus one has at most two Galois points.   \\
\noindent {\it MSC}: primary 14H50, secondary 14H20  \\
\noindent {\it Keywords}: Galois line,  space elliptic curve,  Galois group \\
\end{abstract}

\bigskip

\section{Introduction}
We have been studying Galois embedding of algebraic varieties \cite{y6}, in particular, of  elliptic curves $E$. 
In this case, by Lemma 8 in \cite{y1} we can assume the embedding is associated with the complete linear system $|nP_0|$ for some $n \geq 3$, where $P_0 \in E$. 
Let $f_n : E \hookrightarrow \mathbb P^{n-1}$ be the embedding and put $C_n=f_n(E)$. Then we consider the Galois subspaces, Galois group, the arrangement of Galois subspaces and etc. for $C_n$ in $\mathbb P^{n-1}$.
In the previous papers \cite{dy, y1} we have treated in the case where $n=4$ and settled almost all questions. However, the arrangement of $V_4$ and $Z_4$-lines has not been 
determined in sufficient detail for $j(E)=12^3$, i.e., the curve with an automorphism of order four with a fixed point. In this article we will    complete it. It needs long and tedious computations to determine the $Z_4$-lines explicitly.  
As a byproduct we obtain the number of Galois points for an irreducible quartic curve of genus one, which is a correction of the assertion of Corollary 2 in \cite{y1}. 

\bigskip

\newpage

\section{Statement of result}
\begin{theorem}\label{2}
The arrangement of all the Galois lines for $C_4$, where $j(C_4)=12^3$, is illustrated by the union of the following two figures: 

\begin{center}
\setlength{\unitlength}{0.8mm}
\begin{picture}(120,70)
\put(49,4){$\bullet$}
\put(89,24){$\bullet$}
\put(29,24){$\bullet$}
\put(59,54){$\bullet$}
\put(32.5,19.5){$\circ$}
\put(36.5,15.5){$\circ$}
\put(40.5,12){$\circ$}
\put(44.5,7.5){$\circ$}
\put(63,55){$Q_0$}
\put(20,27){$Q_1$}
\put(55,4){$Q_2$}
\put(90,30){$Q_3$}
\put(60,55){\line(-1,-1){33}}
\put(60,55){\line(1,1){5}}
\put(60,55){\line(-1,-5){11}}
\put(60,55){\line(1,5){1}}
\put(60,55){\line(1,-1){33}}
\put(60,55){\line(-1,1){5}}
\put(30,25){\line(1,-1){23}}
\put(30,25){\line(-1,1){5}}
\put(34,21){\line(13,17){30}}
\put(38,17){\line(13,2){4}}
\put(46,18){\line(13,2){6}}
\put(55,19){\line(35,6){37}}
\put(42,13){\line(3,7){20}}
\put(46,9){\line(11,4){3}}
\put(53,11){\line(37,14){39}}
\put(34,21){\line(-13,-17){2}}
\put(38,17){\line(-13,-2){3}}
\put(42,13){\line(-3,-7){2}}
\put(46,9){\line(-11,-4){3}}
\put(50,5){\line(2,1){43}}
\put(50,5){\line(-2,-1){5}}
\put(30,25){\line(1,0){5}}
\put(39,25){\line(1,0){6}}
\put(49,25){\line(1,0){4}}
\put(56,25){\line(1,0){38}}
\put(30,25){\line(-1,0){5}}
\put(90,25){\line(-1,0){32}}
\put(90,25){\line(1,0){5}}
\end{picture}
\end{center}

\begin{center}
{\rm and} 
\end{center}

\begin{center}
\setlength{\unitlength}{0.8mm}
\begin{picture}(120,70)
\put(49,4){$\bullet$}
\put(89,24){$\bullet$}
\put(29,24){$\bullet$}
\put(59,54){$\bullet$}
\put(65,47.5){$\circ$}
\put(71,42){$\circ$}
\put(77,36){$\circ$}
\put(83,29.5){$\circ$}
\put(63,55){$Q_0$}
\put(20,27){$Q_1$}
\put(55,4){$Q_2$}
\put(90,30){$Q_3$}
\put(60,55){\line(-1,-1){33}}
\put(60,55){\line(1,1){5}}
\put(60,55){\line(-1,-5){11}}
\put(60,55){\line(1,5){1}}
\put(60,55){\line(1,-1){33}}
\put(60,55){\line(-1,1){5}}
\put(30,25){\line(1,-1){23}}
\put(30,25){\line(-1,1){5}}
\put(50,5){\line(2,1){43}}
\put(50,5){\line(-2,-1){5}}
\put(30,25){\line(3,2){26}}
\put(30,25){\line(-3,-2){3}}
\put(30,25){\line(-4,-1){3}}
\put(30,25){\line(4,1){24}}
\put(30,25){\line(1,0){23}}
\put(56,25){\line(1,0){4}}
\put(64,25){\line(1,0){10}}
\put(78,25){\line(1,0){16}}
\put(30,25){\line(-1,0){5}}
\put(60,45){\line(3,2){9}}
\put(57,32){\line(4,1){8}}
\put(69,35){\line(4,1){13}}
\put(50,5){\line(11,19){24}}
\put(50,5){\line(17,13){36}}
\put(50,5){\line(-11,-19){3}}
\put(50,5){\line(-17,-13){3}}
\put(30,25){\line(-1,0){5}}
\put(30,25){\line(-1,0){5}}
\put(30,25){\line(-1,0){5}}
\put(30,25){\line(-1,0){5}}

\end{picture}
\end{center}
In these figures, $\bullet$ denotes the intersection of $V_4$-lines and $\circ$ denotes the intersection of a $V_4$ and a $Z_4$-line. Four points $Q_0, \ Q_1 \ Q_2$ and $Q_3$ are not coplanar. These points form vertices of a tetrahedron. Let $\ell_{ij}$ be the line passing through $Q_i$ and $Q_j$ $(0 \le i < j \le 3)$. Then, all the $V_4$-lines are $\ell_{01}, \ \ell_{02}, \ \ell_{03},  \ \ell_{12}, \ \ell_{13} $ and $\ell_{23} $.  Except these lines, each line is a $Z_4$-line. 
For each vertex there exist two $Z_4$-lines passing through it. Two $Z_4$-lines which do not pass through the same vertex are disjoint. A $Z_4$-line meets $V_4$-lines at two points as is shown above. If the one is the vertex $Q_i$, then we let the other be $R_{ij}$, where $0 \le i \le 3$ and $j=1,\ 2$.  
By taking a suitable coordinates of $\mathbb P^3$, we can give the coordinates of $Q_i$ and $R_{ij}$ explicitly as follows, in the following we use the notation $i=\sqrt{-1}$: 

$
Q_0=(0:0:0:1), \ Q_1=(4:-1:2:0), \ Q_2=(4:-1:-2:0), 
$

$
 Q_3=(4:1:0:0),
$

$
R_{01}=(0:0:1:0) , \ R_{02}=(4:-1:0:0), \ R_{31}=(4:-1:2i:0),
$

$
R_{32}=(4:-1:-2i:0), \ R_{21}=(4:1:0:-2\sqrt{2}i), \ R_{22}=(4:1:0:2\sqrt{2}i),
$

$
R_{11}=(4:1:0:2\sqrt{2}), \ R_{12}=(4:1:0:-2\sqrt{2})
$

\end{theorem}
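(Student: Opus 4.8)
The plan is to realize everything through the linear action on $\mathbb P(H^0(E,4P_0)^\ast)=\mathbb P^3$ of the finite group $G$ of automorphisms of $E$ that preserve the linear system $|4P_0|$, and then to read off the Galois lines from the eigenspace decomposition of this action. Concretely, I would take the model $E:y^2=x^3-x$ with $P_0$ the point at infinity, so that $j(E)=12^3$, together with the order-four automorphism $\sigma(x,y)=(-x,iy)$, whose square $\sigma^2=[-1]$ is the hyperelliptic involution $\iota$. Since the automorphisms fixing $P_0$ together with the translations by $E[4]$ are exactly the automorphisms carrying $|4P_0|$ to itself, $G\cong E[4]\rtimes\langle\sigma\rangle$ has order $64$. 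In the basis $(1,x,y,x^2)$ of $H^0(4P_0)$ the maps $\iota$ and $\sigma$ are diagonal, $\iota=\mathrm{diag}(1,1,-1,1)$ and $\sigma=\mathrm{diag}(1,-1,i,1)$, while the translations $\tau_c$ $(c\in E[4])$ act projectively through the theta (Heisenberg) group of $\mathcal O(4P_0)$; writing these translation matrices down explicitly is the first concrete task.

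The key structural input, which I take from \cite{dy,y1}, is that a line $\ell=\mathbb P(U)$ (with $U\subset H^0(4P_0)$ of dimension $2$) is Galois precisely when some order-four subgroup $H\le G$ acts on the pencil of planes through $\ell$ by a single character, i.e. $V/U\cong\chi^{\oplus 2}$ for some $\chi:H\to\mathbb C^\ast$; the group $H$ is then the Galois group, and it is $V_4$ or $Z_4$ according to its isomorphism type. For a cyclic $H=\langle\rho\rangle$ this condition is equivalent to $\rho$ having a repeated eigenvalue on $H^0(4P_0)$, the $2$-dimensional eigenspace being the scalar quotient $V/U$, so that $\ell$ is spanned by the two \emph{simple} eigenlines of $\rho$. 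This is why $Z_4$-lines occur only for $j=12^3$: one needs a linear automorphism of order four, and $\sigma$ (with eigenvalues $1,-1,i,1$, hence the repeated eigenvalue $1$) produces the first one, namely the line $\mathbb P\langle x,y\rangle$.

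For the $V_4$-part I would invoke the theorem of \cite{y1} that the $V_4$-lines are the six edges of a tetrahedron, identify its four vertices $Q_i$ as the pairwise intersections of these edges (computed from the weight-two characters of the subgroups $\{1,\tau_c,\iota_a,\iota_{a+c}\}$), and then choose the ``suitable coordinates'' of $\mathbb P^3$ that normalize the $Q_i$ to the stated values. For the $Z_4$-part I would run through the sixteen order-four elements $\tau_c\sigma$ $(c\in E[4])$, whose squares are $\tau_{c+\sigma c}\iota$, compute the eigenvalues of each on $H^0(4P_0)$, and keep exactly those with a repeated eigenvalue; for each such generator the two simple eigenvectors give the two points in which the resulting $Z_4$-line meets the $V_4$-configuration, one of them a vertex $Q_i$ and the other the point $R_{ij}$. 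Matching these eigenpoints against the tetrahedron then yields the asserted incidences: two $Z_4$-lines through each vertex, and the explicit coordinates of the $R_{ij}$.

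The main obstacle is precisely this last eigenvector computation. The translation matrices $\tau_c$ for $4$-torsion $c$ are only defined up to the theta-group scalars, so assembling the $\tau_c\sigma$ and diagonalizing them is delicate bookkeeping; moreover the $4$-torsion of $y^2=x^3-x$ is defined over $\mathbb Q(i,\sqrt2)$, which is exactly where the irrationalities $2\sqrt2\,i$ and $2\sqrt2$ in the $R_{ij}$ enter. Selecting the eight generators with a repeated eigenvalue out of the sixteen, proving that the remaining lines are not Galois, and verifying the disjointness of two $Z_4$-lines not sharing a vertex (equivalently, that their spanning eigenlines are in general position) are the ``long and tedious'' steps the introduction warns about; everything else is a formal consequence of the eigenspace dictionary above.
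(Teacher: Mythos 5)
Your proposal follows essentially the same route as the paper: both reduce the determination of the Galois lines to finding repeated eigenvalues of the order-four linear automorphisms induced on $H^0(4P_0)$ (the paper's properties (1)--(2) of $M(\sigma)$ are exactly your single-character criterion on the pencil of planes), both import the $V_4$-tetrahedron and the admissible form of $Z_4$-generators from \cite{y1}, and both read off each $Z_4$-line as the span of the two simple eigenlines of its generator, which is precisely how the paper's eigenvector tables produce the points $Q_i$ and $R_{ij}$. The only difference is bookkeeping: the paper pre-filters to the eight generators $\sigma_j(z)=iz+(m+ni)/4$ via Lemma 20 of \cite{y1} and computes their matrices by $\wp$-addition formulas, whereas you would run the Heisenberg-type translation matrices over all sixteen elements $\tau_c\sigma$ — the same linear algebra in different clothing.
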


\medskip

In Corollary 2 in \cite{y1} we must asume $j(E) \ne 12^3$. So we correct the corollary as follows:

\begin{corollary}
Let $\Gamma$ be an irreducible quartic curve in $\mathbb P^2$ and $E$ the normalization of it. Assume the genus of $E$ is one.  If $j(E)=12^3 \ (\mathrm{resp.}  \ne 12^3)$, then the number of Galois points is at most two \ {\rm (}resp. one{\rm )}. 
\end{corollary}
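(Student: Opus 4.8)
The plan is to transfer the count from the plane curve $\Gamma$ to the space curve $C_4$ by projection, and then read it off from the incidence structure of the Galois lines given in the Theorem. First I would realise $\Gamma$ as a projection of $C_4$. Since $E$ has genus one and $\deg\Gamma=4$, the normalisation morphism $E\to\Gamma\subset\mathbb P^2$ is given by a net inside a complete linear system $|D|$ with $\deg D=4$; as $\deg D>2g-2$ one has $\dim|D|=3$, and $|D|$ embeds $E$ as the elliptic normal quartic $C_4\subset\mathbb P^3$. Thus $\Gamma=\pi_Q(C_4)$ for the projection $\pi_Q$ from the point $Q\in\mathbb P^3$ determined by the net, with $Q\notin C_4$ and $\pi_Q|_{C_4}$ birational. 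I would then establish the key bijection: a point $P\in\mathbb P^2$ is a Galois point of $\Gamma$ if and only if the line $\overline{QP}\subset\mathbb P^3$ is a Galois line of $C_4$. This holds because projecting $\Gamma$ from $P$ and projecting $C_4$ from $\overline{QP}$ induce the same covering $E\to\mathbb P^1$ under the birational identification $C_4\cong\Gamma$, and being Galois is intrinsic to the function-field extension. Hence the number of Galois points of $\Gamma$ equals the number of Galois lines of $C_4$ through $Q$. (Every Galois line here gives a degree-four covering, so it misses $C_4$ and the corresponding Galois point is outer; an inner Galois point would force a degree-three Galois cover $E\to\mathbb P^1$, hence an order-three automorphism of $E$ with a fixed point, impossible for $j(E)\neq 0$.)

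Next I would extract the incidences from the Theorem. Two $V_4$-lines meet only at a vertex $Q_i$, where three of them concur; each $Z_4$-line meets the union of the $V_4$-lines in exactly the two points $Q_i$ and $R_{ij}$; and two $Z_4$-lines meet only when they share a vertex, meeting there. Consequently the points of $\mathbb P^3$ lying on at least two Galois lines are precisely the four vertices $Q_i$ and the eight points $R_{ij}$: through each vertex pass five Galois lines (three $V_4$ and two $Z_4$), through each $R_{ij}$ pass exactly two (one $V_4$ and one $Z_4$), and every other point lies on at most one Galois line.

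The crux is to show that the vertices are inadmissible projection centres. I would prove that each vertex $Q_i$ is the centre of a central involution (harmonic homology) $\sigma_i$ of $\mathbb P^3$ preserving $C_4$; equivalently, $\sigma_i$ is one of the four flips $P\mapsto c-P$ with $c\in E[2]$, whose four fixed points are the solutions of $2P=c$ and are coplanar precisely because their group-law sum is $2c=0$, so they span the axis plane of the homology. For such a $\sigma_i$ the chords $\overline{X\,\sigma_i(X)}$ all pass through $Q_i$, whence $\pi_{Q_i}|_{C_4}$ identifies $X$ with $\sigma_i(X)$ and is two-to-one onto the conic $C_4/\sigma_i$. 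Thus projection from a vertex produces a conic, not an irreducible quartic of genus one, and the vertices are excluded. Using the explicit coordinates one checks directly that $\mathrm{diag}(1,1,1,-1)$, fixing the plane $\langle Q_1,Q_2,Q_3\rangle=\{x_3=0\}$ with centre $Q_0$, is such an involution, and that analogous homologies sit at $Q_1,Q_2,Q_3$; the matching count of four central involutions and four vertices shows there are no others.

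Finally I would conclude. For any admissible centre $Q$ (a point other than a vertex) the number of Galois lines through $Q$ is at most two. When $j(E)=12^3$ this is attained exactly at the points $R_{ij}$, which are not involution centres and so give a birational projection, the resulting quartic having precisely the two Galois points coming from the $V_4$- and $Z_4$-lines through $R_{ij}$. When $j(E)\neq 12^3$ no $Z_4$-lines exist, the Galois lines are only the six edges of the tetrahedron, and a non-vertex point lies on at most one of them, so $\Gamma$ has at most one Galois point. The main obstacle is the vertex computation above, namely identifying the central involution at each vertex and proving the induced projection is two-to-one, together with the verification that the $R_{ij}$ are genuine birational centres so that the value two is actually realised.
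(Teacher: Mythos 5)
Your proposal is correct and takes essentially the same route as the paper: the paper's proof simply invokes Corollary 2 of \cite{y1} for the reduction of Galois points of $\Gamma$ to Galois lines of $C_4$ through the projection centre, and then reads the count off the incidence structure of Theorem 1, the new feature for $j=12^3$ being exactly the non-vertex intersection points $R_{ij}$ whose projections realize two Galois points. Your write-up supplies in full the details the paper delegates to that citation, namely the realization of $\Gamma$ as a birational projection of $C_4$, the point--line correspondence, and the exclusion of the vertices $Q_i$ as centres (via the harmonic homology, equivalently via the quadric cones of the paper's Lemma 1, which make the projection $2\!:\!1$ onto a conic).
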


In fact, Takahashi found the curve defined by: $s^4+s^2u^2+t^4=0$. 
It is easy to see that the genus of the normalization is one and $(s:t:u)=(0:1:0)$ is a $Z_4$-point and $(1:0:0)$ is a $V_4$-point. 
By using Theorem 1, we can find many such examples as follows:  

\begin{example}
Let $L_{ij}$ and $\ell_{pq}$ be the $Z_4$ and $V_4$-lines passing though $R_{ij}$, where $0 \le i \le 3$, $j=1, 2$ and if $i=0$ or $3$ (resp. $1$ or $2$), then $(p,q)=(1,2)$ (resp. $(0,3)$).  
Let $\pi_{ij} : \mathbb P^3 \cdots \longrightarrow \mathbb P^2$ be the projection with the center $R_{ij}$.  Then,  $\pi_{ij}(C_4)=\Gamma_{ij} $ is an irreducible quartic curve and the points $\pi_{ij}(L_{ij})$ and $\pi_{ij}(\ell_{pq})$ are $Z_4$ and $V_4$-points, respectively.
For example, take the point $R=(0:0:1:0)$ as the projection center. Then, $\pi_{R}(X:Y:Z:W)=(X:Y:W)$. The $Z_4$-line $L : X=Y=0$ and $V_4$-line $\ell : X+4Y=W=0$ pass through $R$. The defining equation of $\pi_R(C_4)$ is $W^4=XY(X-4Y)^2$, $\pi_R(L)=(0:0:1)$ and $\pi_R(\ell)=(-4:1:0)$. By the projective change of coordinates 
\[
X=X'-iY', \ \ Y=-(X'+iY')/4
\]  
we get the example of Takahashi. 
\end{example}

We have an interest in the group generated by the Galois groups belonging to  Galois poits \cite{kty, mo}. In the current case we have the following: 

Let $\mathcal G_0$ (resp. $\mathcal G$) be the group generated by the Galois group belonging to $V_4$-lines ($V_4$ or $Z_4$-line)
 for $C$.  Then we have the followig.

\begin{corollary}
(1) In case $j \not=12^3$, we have $\mathcal G=\mathcal G_0=\langle\rho_0,\rho_1,\rho_2\rangle\cong Z_2\times Z_2\times Z_2$.
an example of the curve with this group is given in \cite{ky}

\[ (4y^4 + 5xy^2 -1)^2 = xy^2(x+8y^2)^2. \]

(2) In case $j=12^3$ we can show $\mathcal G=\langle\sigma_0,\sigma_2,\sigma_6\rangle$.
Putting 

\[ \alpha(z) = z+\frac12,  \ \ \beta(z)=z+\frac{3+i}4, \]

we have $\langle\alpha,\beta\rangle\cong Z_2\times Z_4$ and 

\[ \mathcal G\cong\langle\alpha,\beta\rangle \rtimes \langle\sigma_0\rangle \]

It is easy to see that $\mathcal G_0$ is a normal subgroup of $\mathcal G$.
In particular $|\mathcal G|=32$ and $\mathcal G$ is called an elliptic exceptional group $E(2,2,4)$ in \cite{ky}.
Furthermore this group appears as the group by the embedding of degree 32 of the elliptic curve $j(E)=1$.
\end{corollary}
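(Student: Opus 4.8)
The plan is to extract from Theorem~\ref{2}, together with the explicit coordinates listed there, the Galois group attached to each Galois line as a concrete group of automorphisms of $E=\mathbb{C}/(\mathbb{Z}+\mathbb{Z}i)$, and then to compute the group they generate. Every deck transformation of a projection $\pi_\ell|_{C_4}\colon E\to\mathbb{P}^1$ is induced by a linear automorphism of $\mathbb{P}^3$ preserving $C_4$, hence preserves the embedding by $|4O|$ and is therefore of the form $z\mapsto\epsilon z+c$ with $\epsilon\in\mathrm{Aut}(E,O)$ and $4c\in\mathbb{Z}+\mathbb{Z}i$, i.e. $c\in E[4]$. A $V_4$-line thus contributes a group $\{1,\ z\mapsto -z+c_1,\ z\mapsto -z+c_2,\ z\mapsto z+(c_1-c_2)\}$ with $c_1-c_2\in E[2]$, while a $Z_4$-line contributes a cyclic group generated by some $z\mapsto iz+c$ of order four; the latter can occur only when $j=12^3$, since otherwise $\mathrm{Aut}(E,O)=\{\pm1\}$ admits no automorphism of order four.

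For part (1), since $j\neq 12^3$ there are no $Z_4$-lines, so $\mathcal G=\mathcal G_0$. Running over the six $V_4$-lines and collecting their generating involutions $z\mapsto -z+c$, one checks that $[-1]$ and the translations by the three nonzero points of $E[2]$ all occur, and that no element outside $\langle[-1],E[2]\rangle$ does. Because $-1$ fixes every point of $E[2]$, this group is $\langle[-1]\rangle\times E[2]\cong Z_2\times Z_2\times Z_2$; setting $\rho_0=[-1]$ and taking $\rho_1,\rho_2$ to be translations by two independent $2$-torsion points gives the stated presentation, and the curve of \cite{ky} realizes it.

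For part (2) I would first read off the eight order-four generators attached to the $Z_4$-lines of Theorem~\ref{2}, each of the form $z\mapsto iz+c$. Normalizing the labelling so that $\sigma_0\colon z\mapsto iz$ (whence $\sigma_0^2=[-1]$ and $\langle\sigma_0\rangle\cong Z_4$), the products $\sigma_0^{-1}\sigma_2$ and $\sigma_0^{-1}\sigma_6$ are translations, and one identifies them with $\alpha$ and $\beta$. A direct verification that $\tfrac12$ and $\tfrac{3+i}4$ span a subgroup of $E[4]$ of order eight, with $\alpha$ of order two, $\beta$ of order four and $\alpha\notin\langle\beta\rangle$, gives $\langle\alpha,\beta\rangle\cong Z_2\times Z_4$. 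Conjugation by $\sigma_0$ acts as multiplication by $i$ on the translation lattice, which sends the generators $\tfrac12\mapsto\tfrac{i}2$ and $\tfrac{3+i}4\mapsto\tfrac{-1+3i}4$ back into $\langle\alpha,\beta\rangle$, so $\langle\alpha,\beta\rangle$ is $\sigma_0$-stable; since the nontrivial powers of $\sigma_0$ are not translations, $\langle\alpha,\beta\rangle\cap\langle\sigma_0\rangle=1$. Hence $\langle\sigma_0,\sigma_2,\sigma_6\rangle=\langle\alpha,\beta\rangle\rtimes\langle\sigma_0\rangle$ has order $8\cdot4=32$. For normality of $\mathcal G_0$: $E[2]$ is characteristic in the translation group and $\sigma_0$-stable, hence normal in $\mathcal G$, and since $[-1]=\sigma_0^2$ while conjugation of $[-1]$ by a translation $t_a$ yields $z\mapsto -z+2a$ with $2a\in E[2]$, the subgroup $\mathcal G_0=\langle E[2],[-1]\rangle$ is normal. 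The identification with the elliptic exceptional group $E(2,2,4)$ and the degree-$32$ embedding are then cited from \cite{ky}.

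The main obstacle is the bookkeeping in part (2): one must produce the explicit formula $z\mapsto iz+c_i$ for each of the eight $Z_4$-lines from the coordinate data of Theorem~\ref{2}, and then check \emph{closure} --- that the remaining generators, namely the other $\sigma_i$ and all the $V_4$-groups, already lie in $\langle\sigma_0,\sigma_2,\sigma_6\rangle$, so that this group is all of $\mathcal G$. Establishing that the order is exactly $32$, rather than merely at least $32$, is the delicate point, and it is precisely here that the full arrangement of $Z_4$-lines furnished by Theorem~\ref{2} is indispensable.
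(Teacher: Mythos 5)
Your proposal is correct and is essentially the paper's own (largely implicit) argument: the paper offers no separate proof of this corollary beyond the explicit complex representations $\sigma_i(z)=iz+(m+ni)/4$ and $\rho_i(z)=-z+c$ listed in Section 3, and the corollary follows by exactly the bookkeeping you outline — indeed the closure step you flag as the ``main obstacle'' is immediate from that list, since the eight translation parts $(m+ni)/4$ of the $\sigma_i$ constitute precisely the order-$8$ subgroup $\langle \tfrac12,\tfrac{3+i}4\rangle=\langle\alpha,\beta\rangle$ and the translation parts of the $\rho_i$ are its $2$-torsion elements, so every Galois-line group lies in $\langle\alpha,\beta\rangle\rtimes\langle\sigma_0\rangle$ and $|\mathcal G|=32$ exactly. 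One cosmetic slip: with the usual composition convention the translations equal to $\alpha$ and $\beta$ are $\sigma_6\sigma_0^{-1}$ and $\sigma_2\sigma_0^{-1}$, not $\sigma_0^{-1}\sigma_6$ and $\sigma_0^{-1}\sigma_2$ (the latter are translations by $\tfrac i2$ and $\tfrac{1+i}4$ modulo $\mathcal L$), but both choices generate the same subgroup, so your conclusions are unaffected.
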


\section{Proof}
Hereafter we treat only the case $j(E)=12^3$.  
We use the same notation and convention as in \cite{y1}. Let us recall briefly: 

\begin{enumerate}
\item[$\bullet$] $\pi : \mathbb C \longrightarrow E = \mathbb C/\mathcal L, \ \ \mathcal L=\mathbb Z + \mathbb Z i, \ \ i=\sqrt{-1}$　

\item[$\bullet$] $x=\wp(z), \ y=\wp'(z)$, \  $\wp$-functions with respect to $\mathcal L$. 

\item[$\bullet$] $\varphi : \mathbb C \longrightarrow \mathbb C/\mathcal L \xrightarrow{\sim}$ $E : y^2=4x^3-x$

\item[$\bullet$] $P_{\alpha}:=\varphi(\alpha) \in E, \ (\alpha \in \mathbb C)$, in particular, $P_0 =\varphi(0)$ 

\item[$\bullet$] $+$ denotes the sum of complex numbers $\alpha + \beta$ in $\mathbb C$ and at the same time the sum of divisors $P_{\alpha} + P_{\beta}$ on $E$

\item[$\bullet$]$\sim$ : linear equivalence 

\item[$\bullet$] Note that $P_{\alpha} + P_{\beta} \sim P_{\alpha+\beta} + P_0$ holds true.  

\item[$\bullet$] $V_4$ : Klein's four group

\item[$\bullet$] $Z_n$ : cyclic group of order $n$ 

\item[$\bullet$] $\langle \cdots \rangle$ : the group generated by $\cdots$
\end{enumerate}

Since the embedding is associated with $|4P_0|$, we can assume it is given by 

\[
f=f_4 : E \longrightarrow  \mathbb P^3, \ \ f(x,y)=(1:x^2:x:y)
\]

Put $C=f(E)$. 
The $V_4$-lines have been determined in \cite{y1}. Recall that the Galois group associated with $V_4$-line is $\langle \rho_i, \rho_j  \rangle$ for some $i,\ j$ where $0 \le i < j \le 3 $. 
Let $\sigma$ be a complex representation of a generator of the group associated with $Z_4$-line. As we see in the proof of Lemma 20 in \cite{y1}, $\sigma$ can be expressed as $\sigma(z)=iz+(m+ni)/4$, where $(m,n)=(0,0), \ (2,2),\ (3,1), \ (1,3), \ (1,1), \ (3,3), \ (2,0)$ or $(0,2)$. 
So we put as follows: 

\[
\begin{array}{lcl}
(0) \ \  \sigma_0(z)=iz & \ \ & (1) \ \sigma_1(z)=iz+{\ds \frac{1+i}{2}} \\
(2) \ \ \sigma_2(z)=iz + {\ds \frac{3+i}{4}} & \ \ & (3) \ \ \sigma_3(z)=iz+{\ds \frac{1+3i}{4}} \\
(4) \ \ \sigma_4(z)=iz+{\ds \frac{1+i}{4}} & \ \ & (5) \ \ \sigma_5(z)=iz+{\ds \frac{3+3i}{4}} \\
(6) \ \ \sigma_6(z)=iz+{\ds \frac{1}{2}} & \ \ & (7) \ \ \sigma_7(z)=iz+{\ds \frac{i}{2}}
\end{array}
\]

Furthermore we put 

\[
\rho_0(z)=-z, 
\ \ \rho_1(z)=-z+{\ds \frac{1}{2}}, 
\ \ \rho_2(z)=-z+{\ds \frac{i}{2}}, 
\ \ \rho_3(z)=-z+{\ds \frac{1+i}{2}}. 
\]

Note that 

\[
\rho_0 \equiv {\sigma_0}^2 \equiv {\sigma_1}^2 (\mathrm{mod} \mathcal L),
\ \rho_1 \equiv  {\sigma_2}^2 \equiv {\sigma_3}^2 (\mathrm{mod} \mathcal L), 
\] 

\[ \rho_2 \equiv {\sigma_4}^2 \equiv {\sigma_5}^2 (\mathrm{mod} \mathcal L), 
\ \rho_3 \equiv {\sigma_6}^2 \equiv {\sigma_7}^2 (\mathrm{mod} \mathcal L).
\]

Let $V$ be the vector space spanned by $\{1,\ x^2,\ x,\ y \}$ over $\mathbb C$. 
If $\sigma$ is an element of the Galois group associated with a Galois line $\ell$, then it induces a linear transformation $M(\sigma)$ of $V$. 
The $M(\sigma)$ defines a projective transformation, we denote it by the same letter. 
It has the following properties:
\begin{enumerate}
\item Some eigenvalue belongs to at least two independent eigenvectors. 
\item We have $M(\sigma)(\ell)=\ell$, i.e., $M(\sigma)$ induces an automorphism of $\ell \cong \mathbb P^1$.   
\end{enumerate}

There are two characterizations for the vertices, one is the following Lemma 17 in \cite{y1}: 

\begin{lemma}
There exist exactly four irreducible quadratic surfaces $S_i$ {\rm (} $0 \le i \le 3$ {\rm )} such that each $S_i$ has a singular point and contains $C$. 
Let $Q_i$ be the unique singular point of $S_i$. 
Then the four points are not coplanar. 
\end{lemma}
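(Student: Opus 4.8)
The plan is to determine \emph{all} quadrics containing $C$ explicitly, single out the singular members of this family, and read off their vertices and relative position.

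First I would pin down the pencil of quadrics through $C$. Writing the homogeneous coordinates of $\mathbb P^3$ as $(X_0:X_1:X_2:X_3)$, the embedding $f(x,y)=(1:x^2:x:y)$ gives $X_0=1,\ X_1=x^2,\ X_2=x,\ X_3=y$ on $C$. The identity $x\cdot x=x^2$ yields the quadric $Q_A:\ X_0X_1-X_2^2=0$, and substituting $x^2=X_1$ into $y^2=4x^3-x$ and homogenizing yields $Q_B:\ X_3^2-4X_1X_2+X_0X_2=0$. Since $C$ is an elliptic normal curve of degree $4$ it is projectively normal, so the restriction map $H^0(\mathcal O_{\mathbb P^3}(2))\to H^0(\mathcal O_C(8P_0))$ is surjective; as $\dim H^0(\mathcal O_{\mathbb P^3}(2))=10$ and Riemann--Roch gives $\dim H^0(\mathcal O_C(8P_0))=8$, the space of quadrics vanishing on $C$ has dimension $10-8=2$. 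Because $Q_A$ and $Q_B$ are clearly independent, every quadric containing $C$ equals $\lambda Q_A+\mu Q_B$ for some $(\lambda:\mu)\in\mathbb P^1$.

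Next I would locate the singular members. Let $M(\lambda,\mu)$ be the symmetric matrix of $\lambda Q_A+\mu Q_B$; the last coordinate decouples, so its determinant computes (up to a nonzero constant) to $\lambda\mu(\lambda-2\mu)(\lambda+2\mu)$. Hence the singular quadrics in the pencil are exactly the four members with $(\lambda:\mu)=(1:0),(0:1),(2:1),(-2:1)$, and every other member is smooth of rank $4$. This already gives ``exactly four'' surfaces once irreducibility is confirmed. For the vertices I would solve $M(\lambda,\mu)v=0$ for each of the four values; the kernels come out to the single points $Q_0=(0:0:0:1)$, $Q_3=(4:1:0:0)$, $Q_1=(4:-1:2:0)$, $Q_2=(4:-1:-2:0)$, each the unique singular point of the corresponding cone. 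Finally, non-coplanarity follows from the fact that the $4\times4$ matrix whose rows are these coordinate vectors has determinant $-32\neq0$.

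All of these are elementary calculations, so there is no deep obstacle; the single point that genuinely requires care is checking that each singular member is \emph{irreducible}, i.e.\ that its matrix has rank exactly $3$ rather than $2$ (which would make it a pair of planes). This is guaranteed precisely because the discriminant $\lambda\mu(\lambda-2\mu)(\lambda+2\mu)$ has four \emph{distinct} roots: a simple zero of the discriminant of a pencil of quadrics forces corank $1$, so each $S_i$ is a genuine cone of rank $3$, hence irreducible with a single vertex. Thus the count is exactly four and all four surfaces have the required properties.
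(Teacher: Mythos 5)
Your proof is correct; I verified the key computations. The pencil of quadrics through $C$ is indeed spanned by $X_0X_1-X_2^2$ and $X_3^2-4X_1X_2+X_0X_2$, its discriminant is (up to a nonzero constant) $\lambda\mu(\lambda-2\mu)(\lambda+2\mu)$, and the four vertices come out to exactly the points $Q_0=(0:0:0:1)$, $Q_1=(4:-1:2:0)$, $Q_2=(4:-1:-2:0)$, $Q_3=(4:1:0:0)$ appearing in Theorem \ref{2}, with $\det=\pm32\neq0$ for non-coplanarity. Your corank argument for irreducibility is also sound (a corank-$k$ member forces the discriminant to vanish to order $\geq k$, so a simple root gives rank $3$); note there is an even quicker alternative: $C$ is irreducible and non-degenerate, so it cannot lie on a quadric of rank $\leq 2$, which is a plane pair or double plane. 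However, your route is genuinely different from the paper's: this article does not prove the lemma at all, but quotes it as Lemma 17 of \cite{y1}; the characterization it actually verifies here (Lemma 2, proved by direct computation) identifies each $Q_i$ as the eigenvector of the representation matrix $M(\rho_i)$ belonging to the eigenvalue of multiplicity one. The two approaches buy different things. Your pencil argument is intrinsic and self-contained --- it needs no group action, produces the singular quadric cones $S_i$ explicitly, and proves the ``exactly four'' count cleanly via the dimension count $10-8=2$ from projective normality. The paper's eigenvector characterization, by contrast, ties the vertices directly to the automorphisms $\rho_i$, which is what the rest of the argument needs: the same eigenvector machinery applied to the $M(\sigma_k)$ is how the $Z_4$-lines and the points $R_{ij}$ are then located, so the paper gets the tetrahedron and the new Galois lines by one uniform computation.
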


The other one is as follows: 

\begin{lemma}
The $M(\rho_i) \ (0 \le i \le 3)$ has two eigenvalues $\lambda_{i1}$ and $\lambda_{i2}$ which belong to one and three independent eigenvectors, respectively. 
Let $Q_i$ be the point in $\mathbb P^3$ defined by the eigenvector having the eigenvalue $\lambda_{i1}$. 
Then, these points coincide with the ones in Lemma 1. 
The line passing through $Q_i$ and $Q_j$ $(0 \le i<j \le 3)$ is a $V_4$-line. 
Four points $\{Q_1, \ Q_2,\ Q_3, \ Q_4  \}$ are not coplanar, so they form a vertex of a tetrahedron. 
\end{lemma}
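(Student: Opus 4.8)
The plan is to make each $\rho_i$ completely explicit as a projective transformation $M(\rho_i)$, read off its eigenstructure, and then match the resulting points against the singular quadrics produced by Lemma 1. First I would compute $M(\rho_i)$ from the defining relation $f\circ\rho_i=M(\rho_i)\circ f$, which requires only $x\circ\rho_i$ and $y\circ\rho_i$ expressed through $x=\wp$ and $y=\wp'$. For $\rho_0(z)=-z$ the evenness of $\wp$ gives $M(\rho_0)=\mathrm{diag}(1,1,1,-1)$ at once. For $\rho_1,\rho_2,\rho_3$, which are $z\mapsto -z$ followed by translation by a half-period, I would use the translation formula $\wp(z+\omega)=\wp(\omega)+\frac{(\wp(\omega)-e')(\wp(\omega)-e'')}{\wp(z)-\wp(\omega)}$. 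Here the hypothesis $j=12^3$ is exactly what makes this tractable: the lattice is $\mathbb{Z}+\mathbb{Z}i$ with $\wp(iz)=-\wp(z)$, so the roots of $4x^3-x$ are $\{0,\tfrac12,-\tfrac12\}$ with $\wp\!\left(\tfrac{1+i}{2}\right)=0$, and the formulas collapse to simple rational functions such as $x\circ\rho_3=-1/(4x)$. Clearing the resulting denominator (for $\rho_1$ this is $(2x-1)^2$, and analogously for the others) turns $f\circ\rho_i$ into a linear substitution in $1,x^2,x,y$, which is the matrix.

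I would keep the labor small by using the order-four automorphism $\sigma_0(z)=iz$, whose matrix is $M(\sigma_0)=\mathrm{diag}(1,1,-1,i)$. A short lattice computation gives $\sigma_0\rho_1\sigma_0^{-1}=\rho_2$, while $\rho_0$ and $\rho_3$ are $\sigma_0$-invariant; hence only $M(\rho_1)$ and $M(\rho_3)$ need to be found directly, and then $M(\rho_2)=M(\sigma_0)M(\rho_1)M(\sigma_0)^{-1}$ and $Q_2=M(\sigma_0)(Q_1)$. Because each $\rho_i$ is an involution, $M(\rho_i)^2$ is a scalar matrix, so $M(\rho_i)$ is diagonalizable with exactly two eigenvalues; computing its characteristic polynomial shows it factors as $(\lambda-\lambda_{i1})(\lambda-\lambda_{i2})^3$. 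This is precisely the asserted $1+3$ splitting, and $Q_i$ is the point spanned by the $\lambda_{i1}$-eigenvector. The computation already returns the listed coordinates, e.g. $Q_0=(0:0:0:1)$, $Q_1=(4:-1:2:0)$ and $Q_3=(4:1:0:0)$.

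To identify these $Q_i$ with the vertices of Lemma 1, I would compute the pencil of quadrics through $C$, which is spanned by $Z^2-XY$ and $W^2-Z(4Y-X)$ in coordinates $(X:Y:Z:W)=(1:x^2:x:y)$; setting the discriminant of the pencil to zero produces exactly four singular members, and a one-line gradient computation gives their vertices. Comparing the two short lists of four points shows they agree, which is the required coincidence. For the edges, I would note that the three-dimensional eigenspace $H_i$ of $M(\rho_i)$ is the plane through the other three vertices $Q_j$ $(j\neq i)$ while $Q_i\notin H_i$; hence $M(\rho_i)$ fixes every line $\ell_{jk}$, so $\langle\rho_i,\rho_j\rangle\cong V_4$ stabilizes $\ell_{ij}$, and by the determination of $V_4$-lines in \cite{y1} the edge $\ell_{ij}$ is the $V_4$-line with Galois group $\langle\rho_i,\rho_j\rangle$. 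Non-coplanarity is then the single determinant $\det(Q_0,Q_1,Q_2,Q_3)=-32\neq0$.

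The main obstacle is entirely in the first step: pinning down $M(\rho_1),M(\rho_2),M(\rho_3)$ correctly. The delicate point is that $\rho_i^{*}$ preserves $L(4P_0)$ only after multiplication by the function with divisor $4P_{1/2}-4P_0$ relating the two linearly equivalent divisors, and this is exactly the factor $(2x-1)^2$ (and its analogues) that one must clear before reading off a linear map; a single arithmetic slip there corrupts the eigenvectors and hence every $Q_i$. The relation $\wp(iz)=-\wp(z)$ is what keeps this under control, both by simplifying the half-period formulas and by reducing three matrix computations to essentially one.
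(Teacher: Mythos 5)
Your proposal is correct and follows essentially the same route as the paper: compute $M(\rho_i)$ explicitly from the half-period translation formulas for $\wp$ (the paper's ``addition formula''), read off the $1+3$ eigenvalue splitting, identify the distinguished eigenvectors with the vertices of the singular quadrics through $C$, and quote \cite{y1} for the edges being $V_4$-lines. The paper's proof is exactly this computation displayed in full, and your extra touches---the $\sigma_0$-conjugation shortcut $M(\rho_2)=M(\sigma_0)M(\rho_1)M(\sigma_0)^{-1}$ and the explicit pencil $\lambda(Z^2-XY)+\mu\bigl(W^2-Z(4Y-X)\bigr)$ whose discriminant yields the four cone vertices---are refinements of detail within the same method, not a different argument.
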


\begin{proof}
These are checked by direct computations.
To find the action of $\rho_i$ on the vector space V, we can usse the action on $x=\wp(z)$ and $y=\wp'(z)$. 
Making use of the addition formula on the elliptic curve, we obtain the following.

\[
\begin{array}{lcl}
\rho_0(1,x^2,x,y) & = & (1, \ x^2, \ x, \ -y) \\
\rho_1(1,x^2,x,y) & = &  (4x^2-4x+1, \ x^2+x+\frac{1}{4}, \ 2x^2-\frac{1}{2}, \ 2y) \\
\rho_2(1,x^2,x,y) & = & (4x^2+4x+1, \ x^2-x+\frac{1}{4}, \ -2x^2+\frac{1}{2}, \ 2y) \\
\rho_3(1,x^2,x,y) & = & (4x^2, \ \frac{1}{4}, \ -x, \ -y) \\
\end{array}
\]

We obtain the following representation matrices: 

\[
\begin{array}{cc}
M(\rho_0) = 
\left(
\begin{matrix}
1&0&0&0\\
0&1&0&0\\
0&0&1&0\\
0&0&0&-1
\end{matrix}
\right), 
&
M(\rho_0) = 
\left(
\begin{matrix}
1&4&-4&0\\
1/4&1&1&0\\
-1/2&2&0&0\\
0&0&0&2
\end{matrix}
\right), 
\end{array}
\]

\[
\begin{array}{cc}
M(\rho_2) = 
\left(
\begin{matrix}
1&4&4&0\\
1/4&1&-1&0\\
1/2&-2&0&0\\
0&0&0&2
\end{matrix}
\right), 
&
M(\rho_3) = 
\left(
\begin{matrix}
0&4&0&0\\
1/4&0&0&0\\
0&0&-1&0\\
0&0&0&-1
\end{matrix}
\right)
\end{array}
\]

Therefore, the eigenvalues $\lambda$ and eigenvectors (mod constant multiplications) of $M(\rho)$ can be computed as follows :

\[
\begin{array}{lllllll}
M(\rho_0) & \lambda=-1 &:& (0,0,0,1) & \lambda=1 &:& (1,0,0,0),  \ (0,1,0,0),  \ (0,0,1,0) \\
M(\rho_1) & \lambda=-2 &:& (4,-1,2,0) & \lambda=2 &:& (1,0,-1/4,0), \  (0,1,1,0),  \ (0,0,0,1) \\
M(\rho_2) & \lambda=-2 &:& (4,-1,-2,0) & \lambda=2 &:& (4,0,1,0),  \ (0,1,-1,0),  \ (0,0,0,1) \\
M(\rho_3) & \lambda=4 &:& (4,1,0,0) & \lambda=-4 &:& (4,-1,0,0), \  (0,0,1,0), \  (0,0,0,1) \\
\end{array}
\]

\end{proof}

Similarly, we can find $Z_4$-lines by the following results : 

\[
\begin{array}{lll}
\sigma_0(1,x^2,x,y) &=& (1,x^2,-x,iy) \\
\sigma_1(1,x^2,x,y) &=& (4x^2,\frac{1}{4},x,ix) \\
\sigma_2(1,x^2,x,y) &=& (-2y+\sqrt2(i-1)x^2-\sqrt2(1+i)x-\frac{\sqrt2(i-1)}{4},\\
 && \hfil -\frac12y-\frac{\sqrt2(i-1)}{4}x^2+\frac{\sqrt2(i+1)}{4}x+\frac{\sqrt2(i-1)}{16}, \\
 && \hfil\hfil\hfil \frac{\sqrt2(1+i)}{2}x^2-\frac{\sqrt2(i-1)}{2}x-\frac{\sqrt2(1+i)}{8}, \\
 && \hfil\hfil\hfil\hfil 2x^2+\frac12) \\
\sigma_3(1,x^2,x,y) &=& (4\sqrt2iy-(1+i)(4x^2+4ix-1), \\
 && \hfil \frac14(4\sqrt2iy+(1+i)(4x^2+4ix-1)), \\
 && \hfil\hfil\hfil \frac{i-1}2(4x^2-4ix-1), \\
 && \hfil\hfil\hfil\hfil -\sqrt2 i(4x^2+1)) \\
\sigma_4(1,x^2,x,y) &=& ( -2\sqrt2(1+i)y-4ix^2-4x+i, \\
 && \hfil \frac{-1-i}{\sqrt2}y+ix^2+x-\frac i4, \\
 && \hfil\hfil 2x^2+2ix-\frac12, \\
 && \hfil\hfil\hfil -2\sqrt2(1+i)x^2-\frac{1+i}{\sqrt2} )\\
\sigma_5(1,x^2,x,y) &=& ( 2\sqrt2(1+i)y-4ix^2-4x+i, \\
 && \hfil \frac{1+i}{\sqrt2}+ix^2+x-\frac i4, \\
 && \hfil\hfil 2x^2+2ix-\frac12, \\
 && \hfil\hfil\hfil \frac{1+i}{\sqrt2}(4x^2+1))\\
\sigma_6(1,x^2,x,y) &=& ( 4x^2+4x+1, x^2-x+\frac14, 2x^2-\frac12, -2iy )\\
\sigma_7(1,x^2,x,y) &=& ( 4x^2-4x+1, x^2+x+\frac14, -2x^2+\frac12, -2iy) 
\end{array}
\]

\[
\begin{array}{cc}
M(\sigma_0) = 
\left(
\begin{matrix}
1&0&0&0\\
0&1&0&0\\
0&0&-1&0\\
0&0&0&i
\end{matrix}
\right)
&
M(\sigma_1) = 
\left(
\begin{matrix}
0&4&0&0\\
1/4&0&0&0\\
0&0&1&0\\
0&0&0&i
\end{matrix}
\right)
\end{array}
\]

\[
M(\sigma_2) = -\sqrt{2}i 
\left(
\begin{matrix}
(i+1)/4&-i-1&-i+1&-\sqrt2i\\
-(1+i)/16&(1+i)/4&(-1+i)/4&-i/2\sqrt2\\
(1-i)/8&(-1+i)/2&(1+i)/2&0\\
i/2\sqrt2&\sqrt2i&0&0
\end{matrix}
\right)
\]

\[
M(\sigma_3) = 
\left(
\begin{matrix}
1+i&-4(1+i)&4(1-i)&-4\sqrt2i\\
-(1+i)/4&1+i&-1+i&\sqrt2i\\
(1-i)/2&-2(1-i)&2(1+i)&0\\
-\sqrt2i&-4\sqrt2 i&0&0
\end{matrix}
\right)
\]

\[
M(\sigma_4) = 
\left(
\begin{matrix}
i&-4i&-4&-2\sqrt2(1+i)\\
-i/4&i&1&-(1+i)\sqrt2\\
-1/2&2&2i&0\\
-(1+i)/\sqrt2&2\sqrt2(1+i)&0&0
\end{matrix}
\right)
\]

\[
M(\sigma_5) = 
\left(
\begin{matrix}
i&-4i&-4&2\sqrt{2}(1+i)\\
-i/4&i&1&(1+i)\sqrt2\\
-1/2&2&2i&0\\
(1+i)/\sqrt2&2\sqrt2(1+i)&0&0
\end{matrix}
\right)
\]

\[
\begin{array}{cc}
M(\sigma_6) = 
\left(
\begin{matrix}
1&4&4&0\\
1/4&1&-1&0\\
-1/2&2&0&0\\
0&0&0&-2i
\end{matrix}
\right)
&
M(\sigma_7) = 
\left(
\begin{matrix}
1&4&-4&0\\
1/4&1&1&0\\
1/2&-2&0&0\\
0&0&0&-2i
\end{matrix}
\right)
\end{array}
\]

Eigenvalues $\lambda$ and eigenvectors (mod constant multiplications) of $M(\sigma)$ are as follows :

\[
\begin{array}{llllllllll}
M(\sigma_0) & \lambda=-1 &:& (0,0,0,1) \\
			& \lambda=1 &:& (1,0,0,0), (0,1,0,0) \\
            & \lambda=i &:& (0,0,0,1) \\
M(\sigma_1) & \lambda=-1 &:& (4,-1,0,0) \\
			& \lambda=1 &:& (4,1,0,0), (0,0,1,0) \\
            & \lambda=i &:& (0,0,0,1) \\
M(\sigma_2) & \lambda=\sqrt{2} &:& (4,-1,-2,0) \\
			& \lambda=-\sqrt{2} i &:& (4,0,1,\sqrt{2}i), (0,1,-1,\sqrt2i) \\
            & \lambda=\sqrt{2}i &:& (4,1,0,-2\sqrt{2}i) \\
M(\sigma_3) & \lambda=4i &:& (4,-1,-2,0) \\
			& \lambda=4 &:& (4,0,1,-2\sqrt{2}i), (0,1,-1,-\sqrt2i) \\
            & \lambda=-4 &:& (4,1,0,2\sqrt{2} i) \\
M(\sigma_4) & \lambda=-2-2i &:& (4,1,0,2\sqrt{2}) \\
            & \lambda=2+2i &:& (4,0,-1,-\sqrt{2}), (0,1,1,\sqrt2) \\
            & \lambda=-2+2i &:& (4,-1,2,0) \\
M(\sigma_5) & \lambda=-2-2i &:& (4,1,0,-2\sqrt{2}) \\
            & \lambda=2+2i &:& (4,0,-1,\sqrt{2}), (0,1,1,\sqrt2) \\
            & \lambda=-2+2i &:& (4,-1,2,0) \\
M(\sigma_6) .& \lambda=2i &:& (4,-1,2i,0) \\
            & \lambda=-2i &:& (4,-1,-2,0), (0,0,0,1) \\
            & \lambda=2 &:& (4,1,0,0) \\
M(\sigma_7) & \lambda=2i &:& (4,-1,-2i,0) \\
            & \lambda=-2i &:& (4,-1,2i,0), (0,0,0,1) \\
            & \lambda=2 &:& (4,1,0,0) \\
\end{array}
\] 

\bigskip

The proof of Corollary 2 is the same as Corollary 2 in \cite{y1}. 
It is sufficient to note the intersection points of Galois lines. 
In the case where $j(E)=12^3$, there exist points which are not the vertices $Q_i \ (0 \le i \le 3)$ but the 
intersection of $V_4$ and $Z_4$-lines. 
The projection from such points yield the curve with two Galois points.  

\section{generated Galois group}
We have studied the group generated by the Galois group belonging to Galois points \cite{kty, mo}.  
In the case of Galois embedding of elliptic curves, we have the following. 

\begin{remark}
For each Galois embedding let $\mathcal G$ be the group generated by the Galois groups belongingto the Galois subspaces. 
Then $\mathcal G$ can be realized as the Galois group for some Galois embedding of the elliptic curve. 
\end{remark}

\begin{proof}
We infer readily the theorem from Theorems 7.4 and 7.7 in \cite{ky}. 
\end{proof}

\bibliographystyle{amsplain}

\end{document}